\newtheorem{proposition}{Proposition}[section]
\newtheorem{lemma}[proposition]{Lemma}
\newtheorem{corollary}[proposition]{Corollary}
\newtheorem{conjecture}[proposition]{Conjecture}
\newtheorem{theorem}[proposition]{Theorem}}
\newcommand\Q{{\mathbb Q}}
\newcommand\Z{{\mathbb Z}}
\newcommand\qed{\hfill$\square$}
\newcommand{\lcm}{{\mathrm{lcm}\,}}
\newcommand{\ord}\nu
\newenvironment{proof}{\paragraph{Proof}}{}
\title{Number Fields in Fibers: the Geometrically Abelian Case with Rational Critical Values}
\author{Yuri Bilu\footnote{Institut de Mathématiques de Bordeaux, Université de Bordeaux \& CNRS; \textsf{yuri@math.u-bordeaux.fr}}, 
\stepcounter{footnote}
\stepcounter{footnote}
Florian Luca\footnote{School of Mathematics, Wits University, Johannesburg; \textsf{Florian.Luca@wits.ac.za}}}
\renewcommand*\l@section[2]{%
  \ifnum \c@tocdepth >\z@
    \addpenalty\@secpenalty
    \addvspace{0.2em \@plus\p@}%
    \setlength\@tempdima{1.5em}%
    \begingroup
      \parindent \z@ \rightskip \@pnumwidth
      \parfillskip -\@pnumwidth
      \leavevmode \bfseries
      \advance\leftskip\@tempdima
      \hskip -\leftskip
      #1\nobreak\hfil \nobreak\hb@xt@\@pnumwidth{\hss #2}\par
    \endgroup
  \fi}
\begin{document}

\maketitle

\begin{abstract}
Let~$X$ be an algebraic curve over~$\Q$ and ${t\in \Q(X)}$ a non-constant rational function such that ${\Q(X)\ne \Q(t)}$. 
For every ${ n  \in \Z}$ pick ${P_ n \in X(\bar\Q)}$ such that  ${t(P_n)=n}$. We conjecture that, for large~$N$, among the number fields $\Q(P_1), \ldots, \Q(P_N)$ there are at least $cN$ distinct. We prove this conjecture in the special case when $\bar\Q(X)/\bar\Q(t)$ is an abelian field extension and the critical values of~$t$ are all rational. This implies, in particular, that our conjecture follows from a more famous conjecture of Schinzel. 
\end{abstract}

%%%%%%%%%%%%%%%%%%%%%%%%%%%%%%%%%%%%%%%%%%%%%
{\footnotesize 

\tableofcontents

}

\section{Introduction}

\textsl{Everywhere in this paper ``curve'' means ``smooth geometrically irreducible projective algebraic curve''. }

\bigskip

Let~$X$ be a  curve over~$\Q$  and ${t\in \Q(X)}$ a non-constant rational function such that ${\Q(X)\ne \Q(t)}$. We fix, once and for all, an algebraic closure~$\bar\Q$. All number fields occurring in this article  are subfields of this~$\bar\Q$.

Dvornicich and Zannier \cite[Theorem~2(a)]{DZ94} proved  the following theorem. 

\begin{theorem}[Dvornicich, Zannier]
\label{tdvz}
For every ${ n  \in \Z}$ pick ${P_ n \in X(\bar\Q)}$ such that  ${t(P_n)=n}$.
There exists a real number ${c>0}$ (depending on~$X$ and~$t$, but not on the particular selection of every~$P_n$) such that %and ${N_0=N_0(X,t)> 1}$ 
for every sufficiently large  integer~$N$ the number field ${\Q(P_1,\ldots, P_N)}$ is of degree at least $e^{cN/\log N}$ over~$\Q$.
\end{theorem}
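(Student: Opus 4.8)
The plan is to read off ramification from the fibres and convert it into degree by linear disjointness. First I would fix a plane model: pick $y$ with $\Q(X)=\Q(t,y)$ and an $f(T,Y)\in\Z[T,Y]$, irreducible and monic in $Y$ of degree $d=[\Q(X):\Q(t)]\ge 2$, with $f(t,y)=0$; then each $P_n$ yields a root $\theta_n$ of $f(n,Y)$ with $\Q(\theta_n)=\Q(P_n)\subseteq K_N:=\Q(P_1,\dots,P_N)$. Put $\delta(T)=\mathrm{disc}_Y f(T,Y)\in\Z[T]$, nonzero by separability, and let $S$ be the finite set of primes dividing the leading coefficient of $f$, the content of $\delta$, or the resultant of $\delta,\delta'$. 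Since already the example $t=y^2$, where $\Q(P_n)=\Q(\sqrt n)$ and $K_N=\Q(\sqrt p:p\le N)$ has degree $2^{\pi(N)}$, forces degree $e^{cN/\log N}$, the goal is to produce $\gg N/\log N$ primes that ramify in $K_N$, each contributing a factor $2$ to $[K_N:\Q]$.

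The ramification input is local and elementary. For $p\notin S$ with $v_p(\delta(n))=1$, the reduction of $f(n,Y)$ has a single double root, so over $\Q_p$ one has $f(n,Y)=h(Y)u(Y)$ with $u$ unramified and $h$ quadratic of discriminant valuation $1$; hence $\Q_p[Y]/(h)$ is the ramified quadratic extension and $p$ ramifies in $\Q(\theta_n)$. The subtle point is that $\theta_n$ is a \emph{prescribed} root, chosen adversarially. I would neutralise this with quantitative Hilbert irreducibility: the $n\le N$ for which $f(n,Y)$ is reducible form a thin set, of which there are only $O(N^{1/2+\eps})$; for all other $n$ the $d$ fibre points are $\Q$-conjugate, so $\Q(P_n)=\Q(\theta_n)$ is one and the same degree-$d$ field \emph{independent of the selection}, and it is ramified at $p$ as soon as $v_p(\delta(n))=1$.

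Next I would manufacture the primes. By the Frobenius density theorem the primes $p$ for which $\delta$ has a root modulo $p$ have positive density, and for all but finitely many such $p$ the root is simple; restricting to $p\in(N/2,N]$ and lifting a simple root to $n_p\in\{1,\dots,N\}$ gives $v_p(\delta(n_p))=1$. Discarding the $O(N^{1/2+\eps})$ reducible values, and the at most $\deg\delta$ primes each of them blocks, still leaves a set $\mathcal P$ of such primes with $|\mathcal P|\gg N/\log N$, every one of which ramifies in $\Q(P_{n_p})\subseteq K_N$ for \emph{every} selection. Because $|\delta(n_p)|\le CN^{\deg\delta}$, each $\delta(n_p)$ has at most $\deg\delta$ prime factors exceeding $N/2$; so the graph on $\mathcal P$ joining $p,p'$ whenever $p'\mid\delta(n_p)$ or $p\mid\delta(n_{p'})$ has bounded average degree and admits an independent set $B$ with $|B|\gg|\mathcal P|$. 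For $p\ne p'$ in $B$ the prime $p'$ does not divide $\delta(n_p)$, hence is unramified in $\Q(P_{n_p})$.

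Finally, enumerate $B=\{q_1,\dots,q_b\}$ and set $E_i=\Q(P_{n_{q_1}})\cdots\Q(P_{n_{q_i}})\subseteq K_N$. Each $E_{i-1}$, being a compositum of fields unramified at $q_i$, is unramified at $q_i$, whereas $\Q(P_{n_{q_i}})$ is ramified there; thus $[E_i:E_{i-1}]\ge 2$ and $[K_N:\Q]\ge 2^{b}\ge e^{cN/\log N}$. The step I expect to be the genuine obstacle is the one forcing robustness against the arbitrary choice of the $P_n$: it is only because quantitative Hilbert irreducibility makes almost every fibre field canonical that a single prescribed point cannot dodge the ramification, and marrying this uniformity to the density count for $\mathcal P$, while keeping the bad and reducible contributions below $N/\log N$, is the crux; the local ramification computation and the linear-disjointness bookkeeping are comparatively routine.
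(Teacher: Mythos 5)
You should first note that the paper does not prove this statement at all: it is Theorem~2(a) of \cite{DZ94}, quoted without proof, so your attempt has to be measured against the Dvornicich--Zannier argument itself. Your overall skeleton is the right one --- manufacture many primes that ramify in the chosen fibre fields, then convert ramification at distinct primes into a degree bound $2^{|B|}$ by the compositum argument --- and your device for neutralising the adversarial choice of $P_n$ is sound in substance: by quantitative Hilbert irreducibility all but $O(N^{1/2+\eps})$ of the polynomials $f(n,Y)$ are irreducible, and then the fields generated by the different roots are \emph{conjugate}, hence have equal discriminants and the same ramified primes, so ramification is selection-independent. (Your phrase ``one and the same degree-$d$ field'' is wrong --- conjugate fields need not coincide --- but invariance of the discriminant is all you use, and that is correct.) The final linear-disjointness bookkeeping is also fine.

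The genuine gap is your sole source of ramified primes: the implication ``$v_p(\delta(n))=1\Rightarrow p$ ramifies'', fed by \emph{simple} roots of $\delta=\mathrm{disc}_Y f$ modulo $p$. The polynomial $\delta$ may have no simple roots at all: it is a perfect square in $\Q[T]$ exactly when the Galois group of $f$ over $\Q(T)$ lies in the alternating group $A_d$, which happens for every geometrically Galois cover of odd degree (odd-order permutations are even). Concretely, take ${X=\P^1}$ and ${t=(y^3-3y-1)/(y^2+y)}$, so that the fibre polynomial is Shanks' simplest cubic ${f(T,Y)=Y^3-TY^2-(T+3)Y-1}$, irreducible and cyclic over $\Q(T)$, with ${\mathrm{disc}_Y f=(T^2+3T+9)^2}$. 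Then $v_p(\delta(n))$ is even for all $p$ and $n$, your set $\mathcal P$ is empty, and the proof yields nothing, although the theorem certainly covers this curve. Your definitions silently degenerate in the same case: ${\mathrm{Res}(\delta,\delta')=0}$, so $S$ is not defined, and the Frobenius step ``for all but finitely many such $p$ the root is simple'' is false, since no root mod $p$ is ever simple. This is not a patchable technicality: for such covers the fields $\Q(\theta_n)$ are cyclic cubic, their field discriminants are squares, so ramification is invisible to any parity test on $\delta$ --- even the correct strengthening of your local lemma ($v_p(\delta(n))$ odd $\Rightarrow$ $p$ ramifies, because unramified \'etale algebras have discriminant of even valuation) has nothing to bite on. Note that this degenerate case is precisely the abelian situation the present paper is about, where ramification must instead be read off a radicand, via $v_p(F(n))\not\equiv0 \bmod e$ in a Kummer presentation. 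What Dvornicich and Zannier actually do is work with a finite critical value $\gamma$ of $t$ (it exists since ${d\ge2}$, by Riemann--Hurwitz): when $v_p(n-\gamma)=1$ (with norm and splitting conditions if ${\gamma\notin\Q}$), some point of the fibre $t^{-1}(n)$ must reduce, modulo a prime above $p$, to the critical point where $t-\gamma$ vanishes to order ${e\ge2}$, and comparing the two computations of the valuation of $t(P)-\gamma$ forces $p$ to ramify in that fibre field; your conjugacy/HIT step then transfers this to the prescribed $P_n$, and your compositum argument finishes as before. (There are also minor slips --- for ${p\in(N/2,N]}$ the unique lift of a root into $\{1,\dots,N\}$ may have ${v_p\ge2}$, so one should take ${p\le N/2}$; and one must verify $\delta$ is nonconstant --- but these are cosmetic next to the square-discriminant obstruction.)
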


%One may note that the statement holds true independently of the choice of the points $P_ n $. 

An immediate consequence is the following result.

\begin{corollary}
\label{cdvz}
In the above set-up, there exists a real number ${c>0}$ such that for every sufficiently large integer~$N$,  among the number fields $\Q(P_1), \ldots, \Q(P_N)$ there are at least $cN/\log N$ distinct.  
\end{corollary}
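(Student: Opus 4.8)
The plan is to deduce Corollary~\ref{cdvz} from Theorem~\ref{tdvz} by contraposition, using the elementary fact that each individual field $\Q(P_n)$ has bounded degree. First I would record this uniform bound. Writing ${d=[\Q(X):\Q(t)]}$ for the degree of the covering ${t\colon X\to\P^1}$, every fiber ${t^{-1}(n)}$ is a divisor of degree~$d$ on~$X$, so it is a union of closed points whose residue fields have degrees summing to~$d$. In particular each point of the fiber is defined over a number field of degree at most~$d$, so ${[\Q(P_n):\Q]\le d}$ for all~$n$. Note also that the hypothesis ${\Q(X)\ne\Q(t)}$ forces ${d\ge 2}$, hence ${\log d>0}$, which will matter at the end.

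Next I would bound the degree of the compositum in terms of the number of distinct fields. Suppose that among ${\Q(P_1),\dots,\Q(P_N)}$ there are exactly $m$ distinct fields, say ${K_1,\dots,K_m}$. Then ${\Q(P_1,\dots,P_N)=K_1\cdots K_m}$ is the compositum of these $m$ fields, and since the degree of a compositum is at most the product of the degrees of the factors, the previous step gives
\[
[\Q(P_1,\dots,P_N):\Q]\le \prod_{i=1}^m [K_i:\Q]\le d^m.
\]

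Finally I would combine the two estimates. Theorem~\ref{tdvz} furnishes a constant ${c>0}$ such that, for all sufficiently large~$N$, one has ${[\Q(P_1,\dots,P_N):\Q]\ge e^{cN/\log N}}$. Comparing this with the upper bound ${d^m}$ yields ${d^m\ge e^{cN/\log N}}$, that is ${m\log d\ge cN/\log N}$, and therefore
\[
m\ge \frac{c}{\log d}\cdot\frac{N}{\log N}.
\]
Setting ${c'=c/\log d>0}$ then gives the asserted lower bound ${m\ge c'N/\log N}$ on the number of distinct fields.

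This argument is entirely routine, which is exactly why the authors call the corollary an immediate consequence; I do not expect any genuine obstacle. The single point deserving a moment's attention is the uniform degree bound ${[\Q(P_n):\Q]\le d}$, since it is precisely this uniformity that lets one pass from ``few distinct fields'' to ``small compositum'': without a common upper bound on the individual degrees, the product estimate ${d^m}$ would be unavailable and the whole deduction would collapse.
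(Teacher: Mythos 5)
Your proof is correct and is exactly the argument the paper leaves implicit when it calls the corollary an ``immediate consequence'' of Theorem~\ref{tdvz}: the uniform bound ${[\Q(P_n):\Q]\le d=[\Q(X):\Q(t)]}$ gives ${[\Q(P_1,\dots,P_N):\Q]\le d^m}$ for $m$ the number of distinct fields, and comparing with the lower bound $e^{cN/\log N}$ yields ${m\ge (c/\log d)\,N/\log N}$. Nothing further is needed.
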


Theorem~\ref{tdvz} is, in general, best possible, but Corollary~\ref{cdvz} is, probably, not; see the discussion in the introduction of~\cite{BL16}. In particular, in~\cite{BL16} we suggest the following conjecture.

\begin{conjecture}
\label{cours}
Let~$X$ be a  curve over~$\Q$  and ${t\in \Q(X)}$ a non-constant $\Q$-rational function such that ${\Q(X)\ne \Q(t)}$. Then there exists a real number ${c>0}$ such that  for every sufficiently large integer~$N$,  among the number fields $\Q(P_1), \ldots, \Q(P_N)$ there are at least $cN$ distinct. 
\end{conjecture}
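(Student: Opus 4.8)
The plan is to exhibit a positive-density set of integers whose fields $\Q(P_n)$ are pairwise distinct up to bounded multiplicity, which immediately yields the lower bound $cN$. A tempting first move is a second-moment estimate: writing $a_K=\#\{n\le N:\Q(P_n)=K\}$ for a selection $(P_n)$ minimising the number of distinct fields, one has $\sum_K a_K=N$, and Cauchy--Schwarz gives $\#\{K:a_K\ge1\}\ge N^2/\sum_K a_K^2$. However, already for $X=\P^1$ and $t=x^2$, so that $\Q(P_n)=\Q(\sqrt n)$, the pairs with equal squarefree part force $\sum_K a_K^2\asymp N\log N$, and this route recovers only the bound $N/\log N$ of Corollary~\ref{cdvz} rather than the conjectural $cN$. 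The correct strategy is therefore to attach to most~$n$ a rigid arithmetic invariant that pins~$n$ down inside $[1,N]$.

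The natural invariant is ramification at a large prime. Let $\Delta(T)\in\Z[T]$ be the discriminant of the fibre of~$t$ (equivalently, a resultant governing $\mathrm{disc}\,K_n$ with $K_n:=\Q(P_n)$), so that every prime ramifying in $K_n$, apart from finitely many fixed ones, divides $\Delta(n)$; since $|\Delta(n)|\le N^{O(1)}$ for $n\le N$, one also has $\log|\mathrm{disc}\,K_n|=O(\log N)$. Call $n\le N$ \emph{rigid} if $\Delta(n)$ has a prime factor $p>N$ that genuinely ramifies in $K_n$. On the one hand, such a~$p$ divides $\Delta(n')$ for at most $\deg\Delta$ integers $n'\in[1,N]\subset[1,p)$, since these reduce to roots of $\Delta\bmod p$. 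On the other hand, a rigid field, having discriminant of size $N^{O(1)}$, is ramified at only $O(1)$ primes exceeding~$N$. Combining the two, each number field arises as $K_n$ for at most $O(1)$ rigid~$n$; hence the number of distinct fields is at least a fixed multiple of the number of rigid integers, and it remains to show that these form a positive proportion of $[1,N]$.

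Producing this density is where analytic number theory enters. A prime $p>N$ dividing $\Delta(n)$ forces~$n$ to reduce modulo~$p$ to a critical value of~$t$; standard large-sieve and largest-prime-factor estimates show that a positive proportion of $n\le N$ have a prime factor $p>N$ in $\Delta(n)$. One must further ensure that such a~$p$ divides $\Delta(n)$ to \emph{odd} order, so that it truly ramifies rather than cancelling in the discriminant; guaranteeing this squarefree behaviour of polynomial values is precisely the point at which Schinzel's hypothesis provides the missing input, in accordance with the conditional statement announced in the abstract.

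The decisive obstacle, and the reason the conjecture remains open in general, lies in the phrase ``genuinely ramifies in $K_n$''. For an arbitrary cover $t\colon X\to\P^1$ the geometric monodromy group $G\le S_d$ need not be abelian, and over a branch point the ramification is typically only \emph{partial}: some preimages come together while others stay separate. Consequently a distinguishing prime~$p$ need ramify only in the fields of \emph{some} preimages of~$n$, and the adversarial selection allowed in the conjecture may choose a preimage $P_n$ whose field is unramified at~$p$, annihilating the invariant. Even for a fixed preimage, whether~$p$ ramifies in the degree-$d$ subfield $K_n$ rather than merely in the Galois closure is dictated by the cycle type of inertia inside a point stabiliser, and this must be controlled uniformly across all transitive monodromy groups. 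It is exactly when $\bar\Q(X)/\bar\Q(t)$ is abelian and the critical values are rational that these difficulties disappear: the cover is totally ramified over each rational branch point, so every preimage feels the distinguishing prime, and the fields $\Q(P_n)$ are given by explicit radicals, reducing the whole problem to the Schinzel-type sieving above. Removing the abelian hypothesis, and thereby handling partial ramification against an adversarial choice of fibre point, is the crux I do not know how to overcome, and which leaves the general conjecture open.
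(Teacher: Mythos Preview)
The statement is Conjecture~\ref{cours}, which the paper does \emph{not} prove; it remains open, and your write-up correctly ends at the same impasse. The paper establishes only the special case of Theorem~\ref{tmain} (abelian $\bar\Q(X)/\bar\Q(t)$ with all finite critical values in~$\Q$) and then observes that, since this is precisely the case excluded from Schinzel's Conjecture~\ref{cschin}, the latter would imply Conjecture~\ref{cours} in full.

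Two corrections to your discussion. First, the ``Schinzel'' input invoked in the abstract is not Hypothesis~H on prime values of polynomials, but Conjecture~\ref{cschin}: that ${[\Q(P_1,\ldots,P_N):\Q]\ge e^{cN}}$ whenever the cover is non-abelian or has an irrational finite critical value. No squarefree-value hypothesis is assumed; the sieving in Section~\ref{sgaq} is unconditional because all roots of the relevant polynomial are rational, so Hensel's lemma controls the bad congruence classes completely. Second, the paper's proof of the tractable case does not proceed via a single distinguishing prime ${p>N}$. Instead, Kummer theory reduces to fields $L(F(n)^{1/e})$ with~$F$ split over~$\Q$; one shows (Corollary~\ref{cshpar}) that for a positive proportion of ${n\le N}$ the value $f(n)$ of the radical of~$F$ is $(S,\ell)$-square-free for suitable fixed $S,\ell$, and Lemma~\ref{lssquare} then separates such fields by their \emph{entire} ramification set outside~$S$. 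This is elementary and sidesteps largest-prime-factor analytics altogether; note that your assertion that a positive proportion of ${n\le N}$ have a prime factor ${>N}$ in $\Delta(n)$ is not obviously ``standard'' and already fails for $\Delta(n)=n$. A minor inaccuracy: an abelian cover is not ``totally ramified'' over each branch point but has \emph{uniform} ramification index there, which is what actually neutralises the adversarial choice of~$P_n$.
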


There is also a more famous conjecture (attributed in~\cite{DZ94,DZ95} to Schinzel), which relates to Theorem~\ref{tdvz} in the same way as
Conjecture~\ref{cours} relates to Corollary~\ref{cdvz}. To state it, recall that ${\alpha \in \bar \Q\cup\{\infty\}}$ is called a \textsl{critical value} (or a \textsl{branch point}) of ${t\in \bar\Q(X)}$ if the rational function\footnote{We use the standard convention ${t-\infty=t^{-1}}$.} ${t-\alpha}$ has at least one multiple zero in $X(\bar \Q)$. It is well-known that any rational function ${t\in \bar \Q(X)}$ has at most finitely many critical values, and that~$t$ has at least~$2$ distinct critical values if ${\bar\Q(X)\ne \bar\Q(t)}$ (a consequence of the Riemann-Hurvitz formula). In particular, in this case~$t$ admits at least one \textsl{finite} critical value.

\begin{conjecture}[Schinzel]
\label{cschin}
In the set-up of Conjecture~\ref{cours}, assume that either~$t$ has at least one  finite critical value not belonging to~$\Q$, or the field extension ${\bar \Q(X)/\bar\Q(t)}$ is not abelian. Then there exists a real number ${c>0}$ such that for every sufficiently large integer~$N$ the number field ${\Q(P_1,\ldots, P_N)}$ is of degree at least $e^{cN}$ over~$\Q$.
\end{conjecture}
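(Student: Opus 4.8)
The statement is (a strong form of) Schinzel's conjecture, and in the generality claimed it remains open; what follows is therefore a strategy, organised around the goal of removing the $\log N$ loss from Theorem~\ref{tdvz} under the two stated hypotheses. The natural setting is the Galois closure $\hat X \to \P^1$ of $t\colon X\to\P^1$. Write $G$ for its geometric monodromy group, the Galois group of $\bar\Q(\hat X)/\bar\Q(t)$, and $\tilde G$ for the arithmetic monodromy group, the Galois group of $\Q(\hat X)/\Q(t)$, so that $G\trianglelefteq\tilde G$ with quotient $\mathrm{Gal}(k/\Q)$ for the constant field $k$ of $\hat X$; let $H\le G$ be the stabiliser fixing $X$, of index $\deg t$. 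A point $P_n$ in the fibre over $t=n$ corresponds to an $H$-coset, and $[\Q(P_1,\dots,P_N):\Q]$ is the order of the image of $\mathrm{Gal}(\bar\Q/\Q)$ acting on the product of the fibres over $n=1,\dots,N$, equivalently the degree of the compositum of the splitting fields $\Q(\hat P_n)$. The whole problem is to bound this order from below by $e^{cN}$.

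I would first make precise why the two excluded hypotheses are exactly the two mechanisms producing exponential growth, and why their simultaneous absence caps the degree at $e^{cN/\log N}$. When $G$ is abelian and the critical values $a_1,\dots,a_r$ are rational, Kummer–Abhyankar theory presents $X\to\P^1$ as a composite of cyclotomic and radical covers ramified only over the $a_i$, so that $\Q(P_n)$ is generated by a root of unity together with radicals $(n-a_i)^{1/d}$. The compositum over $n\le N$ is then controlled by the rank of the subgroup of $\Q^\times/(\Q^\times)^d$ generated by $\{\,n-a_i : n\le N\,\}$, and this rank is $O(\pi(N))=O(N/\log N)$ because only primes up to $O(N)$ occur; this both reproduces Theorem~\ref{tdvz} and shows that the exponential bound genuinely fails in this case. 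The content of the conjecture is that dropping either hypothesis destroys this multiplicative bottleneck.

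For the non-abelian case the plan is to reduce the cover modulo a prime $p$ of good reduction and apply the Chebotarev density theorem over the function field $\mathbb{F}_p(t)$, which counts residues $n\bmod p$ whose Frobenius lies in a prescribed conjugacy class of $\tilde G$ and thereby controls the factorisation type of each fibre. The aim is to show that, for a positive proportion of $n\le N$, the splitting field $\Q(\hat P_n)$ introduces a \emph{new} non-abelian constituent of $G$ into the Galois group of the compositum, forcing a factor $m^{cN}$ with $m>1$; here the key structural input is that a non-abelian $G$ has a nontrivial derived series that survives specialisation, so that independent fibres cannot be absorbed into a single abelian quotient. When instead $G$ is abelian but some finite critical value $\alpha$ is irrational, the conjugates of $\alpha$ under $\mathrm{Gal}(\bar\Q/\Q)$ enter $\tilde G$ nontrivially, so that the specialised fields are no longer governed by one multiplicative group over $\Q$ but by its twists across the factorisation of primes in $\Q(\alpha)$; the same Chebotarev bookkeeping should again supply a positive-density set of fibres contributing independently.

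The hard part — and the reason the conjecture is open — will be the linear-disjointness step: bounding the \emph{entanglement} among $\Q(\hat P_1),\dots,\Q(\hat P_N)$ from above well enough that the compositum is forced to grow exponentially. In the abelian–rational situation the entanglement is precisely the set of multiplicative relations among the $n-a_i$, which is why it can be computed exactly and why it defeats the exponential bound; outside that situation one needs a quantitative assertion that, for a positive-density set of $n$, the field $\Q(\hat P_n)$ is linearly disjoint from the compositum of the earlier ones, \emph{uniformly in $N$}. I do not know any form of Chebotarev, the large sieve, or the existing theory of specialisations that yields such uniform independence across $N$ distinct fibres; obtaining it — presumably through new input on the arithmetic of the tower generated by the fibres, or on the images of the associated Galois representations — is the essential difficulty that I cannot at present resolve in full generality.
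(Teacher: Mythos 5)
The statement you were asked to prove is Conjecture~\ref{cschin}, and it is exactly that: a conjecture, attributed to Schinzel, which the paper does not prove and does not claim to prove. There is consequently no ``paper's own proof'' to compare your attempt against. Your decision to say so explicitly, rather than manufacture a spurious argument, is the correct assessment of the situation. What the paper actually proves is Theorem~\ref{tmain}: Conjecture~\ref{cours} holds in precisely the case \emph{excluded} by the hypotheses of Conjecture~\ref{cschin} (all finite critical values rational and ${\bar\Q(X)/\bar\Q(t)}$ abelian), from which it deduces that Conjecture~\ref{cschin} implies Conjecture~\ref{cours}. The only unconditional progress toward Conjecture~\ref{cschin} cited in the paper is due to Dvornicich and Zannier, who establish it when~$t$ admits a critical value of degree~$2$ or~$3$ over~$\Q$.

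As for the content of your strategy sketch: your explanation of why the excluded case is a genuine obstruction --- Kummer theory reduces the fibers to radicals of shifted integers $n-\gamma_i$, whose compositum has degree bounded by $e^{cN/\log N}$ because only primes up to $O(N)$ can occur --- reproduces, essentially verbatim, the paper's own remark in the introduction on why the hypothesis of Conjecture~\ref{cschin} is necessary, so that part is sound. Your proposed route for the open cases (Chebotarev over $\mathbb{F}_p(t)$ to control factorization types of fibers, then forcing new constituents into the compositum) is a reasonable framework, but, as you yourself identify, the step that would have to carry the proof --- uniform linear disjointness, or a quantitative entanglement bound, across $N$ distinct fibers --- is not supplied by any known form of Chebotarev or the large sieve, and that is precisely why the conjecture remains open. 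In short: your proposal is an honest and structurally accurate account of an open problem, not a proof, and no proof exists in the paper either.
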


As Dvornichich and Zannier remark in~\cite{DZ94,DZ95}, the hypothesis in Conjecture~\ref{cschin} is necessary. Indeed, when all finite critical values of~$t$ belong to~$\Q$ and the field extension ${\bar \Q(X)/\bar\Q(t})$ is  abelian, it follows from Kummer's Theory that $\Q(X)$ is contained in the field of the form ${L(t, (t-\gamma_1)^{1/e_1}, \ldots, (t-\gamma_s)^{1/e_s})}$, where~$L$ is a number field,  ${\gamma_1, \ldots, \gamma_s}$ are rational numbers and ${e_1, \ldots, e_s}$ are positive integers. Now if we denote by~$A$ the maximal absolute value of the denominators and the numerators of the rational numbers ${\gamma_1, \ldots, \gamma_s}$, and set ${E=\lcm(e_1, \ldots, e_s)}$, then the number field  ${Q(P_1, \ldots, P_N)}$ is contained in the field, generated over~$L$ by the $E$th roots of prime numbers not exceeding ${AN+A}$; by the Prime Number Theorem, 
the degree of this field cannot exceed $e^{cN/\log N}$ for some ${c>0}$. 

%One may show (see Section~\ref{sgaq}) that \textsl{Conjecture~\ref{cschin} implies Conjecture~\ref{cours}}. %\textbf{(to elaborate)}

Dvornicich and Zannier~\cite{DZ94,DZ95} obtain several results in favor of Schinzel's Conjecture. In particular, they show \cite[Theorem~2(b)]{DZ94} that it holds true if~$t$ admits a critical value of degree~$2$ or~$3$ over~$\Q$.

In~\cite{BL16} we  improve on Corollary~\ref{cdvz}, showing that $cN/\log N$ can be replaced by $N/(\log N)^{1-\eta}$ with some ${\eta>0}$.  
See the introduction of~\cite{BL16} for further relevant references. 

The purpose of the present note is to show that Conjecture~\ref{cours} holds true in the case excluded in Schinzel's conjecture. The following theorem is proved in Section~\ref{sproof}.

\begin{theorem}
\label{tmain}
Conjecture~\ref{cours} holds true when all finite critical values of~$t$ belong to~$\Q$ and the field extension ${\bar \Q(X)/\bar\Q(t})$ is  abelian. 
\end{theorem}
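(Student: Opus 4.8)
The plan is to bound below the number of pairwise non-isomorphic fields among $\Q(P_1),\dots,\Q(P_N)$, using ramification as the distinguishing invariant; this has the advantage of being insensitive to the choice of $P_n$ in its fibre. After clearing denominators I may assume $\gamma_1,\dots,\gamma_s\in\Z$. Let $E=\lcm(e_1,\dots,e_s)$, let $G=\mathrm{Gal}(\bar\Q(X)/\bar\Q(t))$, and fix a finite set of primes $S$ containing the divisors of $E$, all primes of bad reduction of a chosen integral model of $t\colon X\to\P^1$, and all primes dividing some difference $\gamma_i-\gamma_j$. Writing $f(n)=\prod_{i=1}^s(n-\gamma_i)$ and letting $\Pi_n$ be the set of primes ramifying in $\Q(P_n)/\Q$, it suffices to produce $\gg N$ distinct sets $\Pi_n$, since $\Pi_n$ depends only on the isomorphism class of $\Q(P_n)$.

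The key input, and the step I expect to be the main obstacle, is the following local statement: for $p\notin S$ with $p\,\|\,(n-\gamma_i)$ for exactly one index $i$, the prime $p$ ramifies in $\Q(P_n)$ with ramification index $e_i>1$, and this holds for \emph{every} point $P_n$ of the fibre $t^{-1}(n)$. The reason is that $p\mid(n-\gamma_i)$ forces the specialisation $t=n$ to meet the branch point $\gamma_i$ modulo $p$, so the inertia at $p$ acts through the local monodromy of the cover at $\gamma_i$; by tameness ($p\nmid E$) this monodromy is cyclic of order $e_i$, and as $v_p(n-\gamma_i)=1$ is prime to $e_i$ the inertia image is the full group $C_{e_i}$. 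Here the abelian hypothesis is decisive: identifying the geometric fibre with the regular $G$-torsor, inertia acts by translation by an element of order $e_i$ of the abelian group $G$, so every orbit has size exactly $e_i$, i.e.\ every point of $t^{-1}(n)$ acquires ramification index $e_i$ at $p$. (In the non-abelian case some fibre points could stay unramified, and an adversarial choice of $P_n$ could evade the ramification --- precisely the situation left open by Schinzel's conjecture.) Making this rigorous via the tame specialisation of $X\to\P^1$ over $\Z_p$ is the technical heart of the argument.

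Granting the lemma, I would finish by a sieve. Using the Chinese Remainder Theorem, restrict $n$ to a single residue class modulo $M=\prod_{p\in S}p^{C_p}$, chosen so that the $S$-part of $f(n)$ is a fixed constant $D$ on the class (pick the residue to pin each $v_p(n-\gamma_i)$, $p\in S$, to a finite value). Call such an $n$ \emph{good} if $f(n)$ is squarefree away from $S$; since $f$ is separable this holds for a positive proportion of $n$, giving $\gg N$ good $n\le N$. For good $n$ the containment $\Pi_n\subseteq S\cup\{p:p\mid f(n)\}$ and the lemma together yield $\Pi_n\setminus S=\{p\notin S:p\mid f(n)\}$; as $f(n)/D$ is squarefree, this recovers $f(n)=D\prod_{p\in\Pi_n\setminus S}p$ from $\Pi_n$, and hence determines $n$ up to the $O(1)$ roots of $f(x)=f(n)$. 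Thus $n\mapsto\Pi_n$ is boundedly-to-one on the good $n$, so the sets $\Pi_n$ take $\gg N$ distinct values, and the fields $\Q(P_n)$ fall into $\gg N$ distinct isomorphism classes --- more than the required $cN$, and independently of how each $P_n$ was chosen.
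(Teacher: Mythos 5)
Your route is sound and genuinely different from the paper's. The paper never touches inertia in the cover directly: it uses Kummer theory to write ${L(X)=L(t,F_1(t)^{1/e_1},\dots,F_s(t)^{1/e_s})}$ with each ${F_i\in\Q[t]}$ having rational roots (the critical values), fixes one radical $F(t)^{1/e}$, and observes that $L(F(n)^{1/e})$ is a subfield of $L(P_n)$ for \emph{every} choice of $P_n$ in the fiber; distinctness of the radical fields $L(F(n)^{1/e})$ is then detected by an elementary valuation computation (a prime ${p\notin S}$ with ${\ord_p(F(n))=1}$ ramifies in $L(F(n)^{1/e})$, Lemma~\ref{lssquare}), fed by the Hensel-based sieve of Section~\ref{sgaq}, and the proof concludes by counting subfields of a field of bounded degree. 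You instead work with $\Q(P_n)$ itself and force ramification at every fiber point by tame specialization of inertia: your key local statement is Beckmann's theorem on specialization of inertia (inertia at~$p$ is conjugate to $\langle g_i^{m}\rangle$, ${m=\ord_p(n-\gamma_i)}$, for a generator $g_i$ of inertia over~$\gamma_i$), and your torsor/translation explanation of why abelianness makes \emph{every} point of the fiber ramified is exactly right; it is also a conceptual explanation of why the non-abelian case is genuinely harder (inertia can fix fiber points, which is the adversarial escape route behind Schinzel's conjecture). Your endgame also differs: you recover $n$, up to $O(1)$ choices, from the ramified-prime set $\Pi_n$, rather than counting subfields, and that works. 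In short, the paper's Kummer trick replaces your nontrivial (though known) geometric input by a two-line computation in radical extensions; your argument buys geometric insight, the paper's buys self-containedness.

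Two caveats. First, your key lemma is asserted, not proven, and you correctly identify it as the technical heart; since it is a known theorem (Beckmann; alternatively, good reduction of the cover at ${p\notin S}$ plus Abhyankar's lemma yields both the containment ${\Pi_n\subseteq S\cup\{p: p\mid f(n)\}}$ and the forced ramification, with the compositum argument — $p$ ramified in $L(P_n)/L$ but unramified in $L/\Q$ forces $p$ ramified in $\Q(P_n)/\Q$ — descending the conclusion from $L$ to $\Q$), the gap is one of exposition rather than substance, but a complete write-up must supply it. Note also that only ``$p$ ramifies'' is justified and needed; the sharper claim of ramification index exactly $e_i$ in $\Q(P_n)/\Q$ is not what the argument gives. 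Second, your justification of the sieve step, ``since $f$ is separable this holds for a positive proportion of $n$'', misstates the reason: for a general separable $f$ of degree at least~$4$, positive density of squarefree (even squarefree-away-from-$S$) values is an open problem, known only under the abc conjecture. What saves you is that your ${f(T)=\prod_i(T-\gamma_i)}$ is a product of \emph{linear} factors — precisely the rational-critical-values hypothesis — so that ${p^2\mid n-\gamma_i}$ with $p$ large compared to $N^{1/2}$ forces ${n=\gamma_i}$, and the sieve closes; this is exactly what Lemma~\ref{lshpar} of the paper does. You should make that dependence explicit, since as stated the parenthetical reason would not survive scrutiny even though the conclusion is correct in your setting.
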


An immediate consequence of Theorem~\ref{tmain} is that \textsl{Conjecture~\ref{cschin} implies Conjecture~\ref{cours}}.

%Theorem~\ref{tmain} is proved in Section~\ref{sproof} after some preliminary work  in Section~\ref{sgaq}. 

\paragraph{Acknowledgments}
Yuri Bilu worked on this article when he was visiting the University of Xiamen. He thanks this institution for financial support and excellent working conditions. 

We thank Elina Wojciechowska who asked the question that instigated this note. We also thank the referee for the encouraging report and for detecting some inaccuracies.

\section{Abundance of Almost Square-Free Values of Polynomials with  Rational Roots}

\label{sgaq}

Let~$S$ be a finite set of prime numbers and~$\ell$ a positive integer. We say that  ${a\in \Z}$ is \textsl{$S$-square-free} if ${\ord_p(a)\in \{0,1\}}$ for every prime ${p\notin S}$. If, in addition to this, ${\ord_p(a)\le \ell}$ for all ${p\in S}$, then we say that~$a$ is  \textsl{$(S,\ell)$-square-free}.

We say that  integers~$a$ and~$b$ are \textsl{$S$-distinct} if there exists a prime ${p\notin S}$ such that ${\ord_p(a)\ne \ord_p(b)}$, and \textsl{$S$-equal} otherwise.  %Notation ${a\not\approx_S b}$ and ${a\approx_S b}$, respectively. 

In the following lemma we collect some elementary properties of the notions just introduced.

\begin{lemma}
\label{lssquare}
Let~$S$ and~$\ell$ be as above. 
\begin{enumerate}
\item
\label{iellone}
Let $a_1,\ldots, a_k$ be  distinct $(S,\ell)$-square-free integers which are, however, all $S$-equal. Then ${k\le 2 (\ell+1)^{|S|}}$. 

\item
\label{inisom}
Let~$L$ be a number field and~$S$ a finite set of (rational) prime numbers containing all the primes ramified in~$L$. 
Let  $a,b$ be $S$-distinct  $S$-square-free  integers.  %and ${a_1b_1\not\approx_S a_2b_2}$. 
Let ${e>1}$ be an integer whose all prime divisors belong to~$S$, and let $A,B$ be  integers satisfying 
$$
a\mid A, \quad A\mid a^{e-1},\quad b\mid B, \quad B \mid b^{e-1}.
$$ 
Then the number fields $L(A^{1/e})$ and $L(B^{1/e})$ are not isomorphic.

\item
\label{imanyfields}
Let~$L$ and~$S$ be as in part~\ref{inisom}. 
Let $a_1,\ldots, a_N$ be distinct $(S,\ell)$-square-free   integers. Let ${e>1}$ be an integer whose all prime divisors belong to~$S$, and let $A_1, \ldots, A_n$ be positive integers satisfying 
$$
a_i\mid A_i, \quad A_i\mid a_i^{e-1} \qquad (i=1, \ldots, N). 
$$  
Then among the number fields $L(A_i^{1/e})$ there  are at least $N/2(\ell+1)^{|S|}$ distinct. 

\end{enumerate}
\end{lemma}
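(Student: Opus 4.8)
The plan is to prove the three parts in order, since part~\ref{imanyfields} is a direct combination of parts~\ref{iellone} and~\ref{inisom}, while part~\ref{inisom} carries the only genuinely arithmetic content.

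For part~\ref{iellone}, I would argue by counting factorization data. Since the $a_i$ are pairwise $S$-equal, the value $\ord_p(a_i)$ is independent of $i$ for every $p\notin S$; call it $v_p$. Hence each $a_i$ factors as
$$
a_i=\epsilon_i\cdot m\cdot \prod_{p\in S}p^{\,\ord_p(a_i)},\qquad m=\prod_{p\notin S}p^{\,v_p},
$$
where $\epsilon_i\in\{\pm1\}$ and $m$ does not depend on $i$. Because the $a_i$ are $(S,\ell)$-square-free, each exponent $\ord_p(a_i)$ with $p\in S$ lies in $\{0,1,\dots,\ell\}$. Thus $a_i$ is completely determined by the pair $\bigl(\epsilon_i,(\ord_p(a_i))_{p\in S}\bigr)$, which ranges over a set of size $2(\ell+1)^{|S|}$, and distinctness of the $a_i$ forces $k\le 2(\ell+1)^{|S|}$.

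Part~\ref{inisom} I would prove by separating the rational primes that ramify in the two fields. Since $a$ and $b$ are $S$-distinct and $S$-square-free, there is a prime $p\notin S$ with $\{\ord_p(a),\ord_p(b)\}=\{0,1\}$; after possibly swapping the roles of $A$ and $B$, assume $\ord_p(a)=1$ and $\ord_p(b)=0$. Then $a\mid A\mid a^{e-1}$ gives $1\le\ord_p(A)\le e-1$, so $e\nmid\ord_p(A)$, whereas $b\mid B\mid b^{e-1}$ forces $\ord_p(B)=0$, so $e\mid\ord_p(B)$. The key input is the following tame Kummer criterion: if $p\nmid e$ and $p$ is unramified in $L$, then $p$ ramifies in $L(C^{1/e})$ if and only if $e\nmid\ord_p(C)$. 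Both hypotheses hold here, because $S$ contains all primes dividing $e$ and all primes ramified in $L$, and $p\notin S$. Applying the criterion, $p$ ramifies in $L(A^{1/e})$ but not in $L(B^{1/e})$; since any isomorphism of number fields fixes $\Q$ and preserves the set of ramified rational primes, the two fields cannot be isomorphic.

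The main obstacle is justifying this criterion, which I would verify by passing to completions. Fix a prime $\gerp$ of $L$ above $p$; as $p$ is unramified in $L$ we have $\ord_\gerp=\ord_p$ on $\Q$, so $\ord_\gerp(C)=\ord_p(C)=:m$. Write $C=u\,p^{m}$ with $u$ a $\gerp$-adic unit. If $e\mid m$, then $L_\gerp(C^{1/e})=L_\gerp(u^{1/e})$, and since $p\nmid e$ the polynomial $x^e-u$ has unit derivative at its roots, so this extension is unramified. If $e\nmid m$, extend $\ord_\gerp$ to a valuation $w$ on $L_\gerp(C^{1/e})$; then $w(C^{1/e})=m/e\notin\Z$, so the value group of $w$ properly contains that of $\ord_\gerp$ and $p$ ramifies. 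Finally, for part~\ref{imanyfields} I would partition $\{1,\dots,N\}$ into classes under the relation $i\sim j$ iff $a_i$ and $a_j$ are $S$-equal. By part~\ref{iellone} each class has at most $2(\ell+1)^{|S|}$ elements, so there are at least $N/\bigl(2(\ell+1)^{|S|}\bigr)$ classes; picking one index from each yields pairwise $S$-distinct $S$-square-free integers, and part~\ref{inisom} makes the corresponding fields $L(A_i^{1/e})$ pairwise non-isomorphic, hence pairwise distinct. This produces at least $N/\bigl(2(\ell+1)^{|S|}\bigr)$ distinct fields among the $L(A_i^{1/e})$.
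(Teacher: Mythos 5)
Your proposal is correct and follows essentially the same route as the paper: the same counting of sign and $S$-exponent vectors for part~\ref{iellone}, the same choice of a prime ${p\notin S}$ with ${\{\ord_p(a),\ord_p(b)\}=\{0,1\}}$ and the ramification criterion distinguishing $L(A^{1/e})$ from $L(B^{1/e})$ for part~\ref{inisom}, and the same combination of the two for part~\ref{imanyfields}. The only difference is that you spell out details the paper treats as evident, notably the proof of the tame Kummer ramification criterion via completions, which the paper invokes without justification.
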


\begin{proof}
Part~\ref{iellone} is obvious. To prove~\ref{inisom}, observe that, by the hypothesis, there exists a prime  ${p\notin S}$ such that one of the numbers 
$\ord_p(a)$, $\ord_p(b)$
is~$1$ and the other is~$0$; say, ${\ord_p(a)=1}$ and  ${\ord_p(b)=0}$. Then ${1\le\ord_p(A)\le e-1}$ and  ${\ord_p(B)=0}$, which implies that~$p$ ramifies in the field $L(A^{1/e})$ but not in $L(B^{1/e})$.  This proves~\ref{inisom}. 
Finally,~\ref{imanyfields} follows from~\ref{iellone} and~\ref{inisom}.  \qed
\end{proof}

\bigskip

In the sequel 
$$
f(T)=\alpha_dT^d+\cdots+\alpha_0=\alpha_d(T-\gamma_1)\cdots (T-\gamma_d)\in \Z[T]
$$
is a separable polynomial whose all roots $\gamma_1,\ldots, \gamma_d$ belong to~$\Q$. %We will assume it primitive; that is, ${\gcd(\alpha_0, \ldots, \alpha_d)=0}$ 
For every prime number~$p$ set
$$
\lambda_i(p)= \ord_p(f'(\gamma_i)) \quad (i=1,\ldots, d),\qquad 
\lambda(p)=\max_{1\le i\le d}\lambda_i(p).
$$
Note that, while individual $\lambda_i(p)$ may be negative, we always have ${\lambda(p)\ge 0}$, and, moreover, 
\begin{equation}
\label{elagede}
\lambda(p)\ge\delta(p), 
\end{equation}
where
${\delta(p)=\min_{1\le i\le d}\ord_p(\alpha_i)}$.  
Indeed, it follows from the Gauss Lemma that 
$$
\delta(p)= \ord_p(\alpha_d)+\sum_{i=1}^d\min\{0,\ord_p(\gamma_i)\}.
$$
Now, if, say, ${\ord_p(\gamma_1)\ge\ord_p(\gamma_i)}$ for ${i\ge 2}$ then
$$
\lambda_1(p)=\ord_p(\alpha_d)+\sum_{i=2}^d\ord_p(\gamma_1-\gamma_i)\ge  \ord_p(\alpha_d)+\sum_{i=2}^d\min\{0,\ord_p(\gamma_i)\}\ge \delta(p), 
$$
proving~\eqref{elagede}. 

We will use the following variation of Hensel's lemma. 

\begin{lemma}
\label{lhens}
Let~$n$ be an integer such that ${\ord_p(f(n))>2\lambda(p)}$. Then  there exists a unique ${j\in \{1, \ldots, d\}}$ such that ${\ord_p(n-\gamma_j)=\ord_p(f(n))-\lambda_j}$. 
\end{lemma}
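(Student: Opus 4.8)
The plan is to work entirely $p$-adically, writing ${v_i:=\ord_p(n-\gamma_i)}$ and exploiting the two multiplicative factorizations
$$
\ord_p(f(n))=\ord_p(\alpha_d)+\sum_{i=1}^d v_i,\qquad \lambda_i(p)=\ord_p(\alpha_d)+\sum_{k\ne i}\ord_p(\gamma_i-\gamma_k),
$$
the latter coming from ${f'(\gamma_i)=\alpha_d\prod_{k\ne i}(\gamma_i-\gamma_k)}$. The whole statement should reduce to the assertion that the hypothesis ${\ord_p(f(n))>2\lambda(p)}$ forces the maximum ${V:=\max_i v_i}$ to be attained at a single index $j$, which will then be the desired one. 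The basic tool is the ultrametric inequality: for distinct roots one has ${\ord_p(\gamma_i-\gamma_k)\ge\min(v_i,v_k)}$, with equality whenever ${v_i\ne v_k}$.

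First I would record a bookkeeping identity to cope with the fact that the $\gamma_i$ are only rational. Since ${n\in\Z}$ is a $p$-adic integer, a short case check gives ${\min(0,v_i)=\min(0,\ord_p(\gamma_i))}$ for every $i$, so the Gauss-lemma formula for $\delta(p)$ quoted above yields ${\ord_p(\alpha_d)+\sum_i\min(0,v_i)=\delta(p)\ge0}$. This controls all the ``denominator'' contributions at once, and already disposes of the case ${V<0}$: there every ${v_i=\ord_p(\gamma_i)<0}$, whence ${\ord_p(f(n))=\delta(p)\le\lambda(p)}$, contradicting the hypothesis. So ${V\ge0}$.

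Next I would show the maximum is attained uniquely. If the maximizing set ${J=\{k:v_k=V\}}$ had ${|J|\ge2}$, then for ${j_1\in J}$ one has ${\ord_p(\gamma_{j_1}-\gamma_k)\ge V}$ for the other ${k\in J}$ and ${\ord_p(\gamma_{j_1}-\gamma_k)=v_k}$ for ${k\notin J}$, so ${\lambda_{j_1}(p)\ge\ord_p(\alpha_d)+(|J|-1)V+\sum_{k\notin J}v_k}$. Combining this with the formula for $\ord_p(f(n))$, the bound ${\ord_p(\alpha_d)+\sum_{k\notin J}v_k\ge\delta(p)\ge0}$, and ${(|J|-2)V\ge0}$, one gets ${\ord_p(f(n))\le2\lambda_{j_1}(p)\le2\lambda(p)}$, a contradiction. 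Hence $V$ is attained at a unique index $j$. Existence of the $j$ in the statement is then immediate: strictness gives ${\ord_p(\gamma_j-\gamma_k)=v_k}$ for all ${k\ne j}$, so ${\lambda_j(p)=\ord_p(\alpha_d)+\sum_{k\ne j}v_k=\ord_p(f(n))-v_j}$, i.e. ${v_j=\ord_p(f(n))-\lambda_j(p)}$.

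For uniqueness I would argue by contradiction. If some ${j'\ne j}$ also satisfied the relation, then ${v_{j'}=\ord_p(f(n))-\lambda_{j'}(p)\ge\ord_p(f(n))-\lambda(p)>\lambda(p)}$, so ${v_{j'}>\lambda(p)\ge0}$ and, since ${v_j>v_{j'}}$, ${\ord_p(\gamma_j-\gamma_{j'})=v_{j'}}$. Using again ${\ord_p(\gamma_j-\gamma_k)=v_k}$ for ${k\ne j}$ and isolating the ${k=j'}$ term, ${\lambda_j(p)=v_{j'}+\bigl(\ord_p(\alpha_d)+\sum_{k\ne j,j'}v_k\bigr)\ge v_{j'}+\delta(p)\ge v_{j'}>\lambda(p)}$, contradicting ${\lambda_j(p)\le\lambda(p)}$. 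I expect the main obstacle to be precisely the presence of denominators in the $\gamma_i$: the individual valuations ${v_i}$ and ${\ord_p(\gamma_i-\gamma_k)}$ may well be negative, and the only thing keeping the estimates under control is the inequality ${\ord_p(\alpha_d)+\sum_k\min(0,v_k)=\delta(p)\ge0}$ coming from the Gauss lemma; making the tie case contradict the hypothesis \emph{sharply} (at the threshold $2\lambda(p)$, with no slack) is the delicate point.
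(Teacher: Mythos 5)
Your proof is correct --- I checked each step, including the two points you flag as delicate: the identity ${\min(0,v_i)=\min(0,\ord_p(\gamma_i))}$, which reduces all denominator control to the Gauss-lemma formula together with ${\delta(p)\ge 0}$, and the sharp tie-breaking estimate ${2\lambda_{j_1}(p)-\ord_p(f(n))\ge \bigl(\ord_p(\alpha_d)+\sum_{k\notin J}v_k\bigr)+(|J|-2)V\ge \delta(p)\ge 0}$, which lands exactly on the threshold $2\lambda(p)$ with no slack, as needed. However, your route is genuinely different from the paper's. The paper likewise starts with an index $j$ maximizing $\ord_p(n-\gamma_j)$ and uses the same Gauss-lemma identity to exclude the case ${\ord_p(\gamma_j)<0}$ (your case ${V<0}$), but from there it proves only the lower bound ${\ord_p(n-\gamma_j)>\lambda_j}$, deduces ${\ord_p(f'(n))=\lambda_j}$, and then invokes Hensel's lemma (applicable since ${\ord_p(f(n))>2\ord_p(f'(n))}$): Hensel's uniqueness clause supplies the uniqueness of $j$, while the exact equality ${\ord_p(n-\gamma_j)=\ord_p(f(n))-\lambda_j}$ is extracted from the Taylor expansion ${f(n)\equiv f'(\gamma_j)(n-\gamma_j)\bmod p^{2\ord_p(n-\gamma_j)}}$. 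You dispense with both Hensel and Taylor: the exact equality falls out of ultrametric equality in the product formulas for $f(n)$ and $f'(\gamma_j)$ once the maximizer is known to be strict, and both uniqueness statements (of the maximizer, and of the index satisfying the relation) come from direct valuation estimates against ${\delta(p)\ge0}$. What each buys: your argument is entirely self-contained, using nothing beyond the ultrametric inequality, and it exposes precisely where the hypothesis ${\ord_p(f(n))>2\lambda(p)}$ is consumed; the paper's argument is shorter if one takes Hensel as a black box, and it explains the lemma's billing as ``a variation of Hensel's lemma.''
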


\begin{proof}
We will write~$\ord(\cdot)$, $\lambda_j$,~$\lambda$ and~$\delta$ instead of~$\ord_p(\cdot)$, $\lambda_j(p)$,~$\lambda(p)$ and~$\delta(p)$.

Choose~$j$ such that ${\ord(n-\gamma_j) \ge \ord(n-\gamma_i)}$ for all ${i\ne j}$. (\textsl{A priori} this~$j$ is not uniquely defined, but in the course of the proof we will see that it actually is.) First of all, we claim that 
\begin{equation}
\label{einte}
\ord(\gamma_j)\ge 0. 
\end{equation}
Indeed, if ${\ord(\gamma_j)< 0}$ then ${\ord(n-\gamma_i)=\ord(\gamma_i)<0}$ for all ${i=1,\ldots,n}$, which implies that 
$$
\ord(f(n)) =\ord(\alpha_d)+\sum_{i=1}^d\ord(\gamma_i)=\ord(\alpha_d)+\sum_{i=1}^d\min\{0,\ord(\gamma_i)\}=\delta.
$$
Since ${\ord(f(n))>2\lambda}$, this contradicts~\eqref{elagede}. This proves~\eqref{einte}. 

We claim further that
\begin{equation}
\label{everycl}
\ord(n-\gamma_j)>\lambda_j.
\end{equation}
Indeed, our definition of~$j$ implies that 
$$
\ord(n-\gamma_i)\le \ord(\gamma_j-\gamma_i)\qquad (i\ne j).
$$
Hence 
\begin{align*}
\ord(f(n))&= \ord(\alpha_d)+\sum_{i=1}^d\ord(n-\gamma_i)\\
&\le \ord(\alpha_d)+\sum_{i\ne j}\ord(\gamma_j-\gamma_i) + \ord(n-\gamma_j)\\
&=\lambda_j+\ord(n-\gamma_j).
\end{align*}
Therefore
$$
\ord(n-\gamma_j)\ge \ord(f(n))-\lambda_j>2\lambda-\lambda_j\ge \lambda_j,
$$
which proves~\eqref{everycl}.

Since  ${\ord(f'(\gamma_j))=\lambda_j}$, inequality~\eqref{everycl} implies that
\begin{equation}
\label{ederiv}
\ord(f'(n))=\lambda_j. 
\end{equation}
Thus, we have 
${\ord(f(n))>2\lambda \ge 2\ord(f'(n))}$. 
Hensel's lemma implies that~$f$ has a unique root ${\gamma\in \Q_p}$ with the property 
$$
\ord(n-\gamma)\ge \ord(f(n))-\ord(f'(n))>2\lambda-\lambda_j\ge \lambda_j.
$$
Since the root~$\gamma_j$ has this property, we must have ${\gamma=\gamma_j}$.

To conclude the proof of the lemma, observe that the Taylor expansion 
$$
f(n)=f(\gamma_j)+f'(\gamma_j)(x-\gamma_j)+\cdots 
$$
implies  the congruence
$$
f(n)\equiv f'(\gamma_j)(n-\gamma_j)\bmod p^{2\ord(n-\gamma_j)},
$$
which, together with~\eqref{everycl}, proves that ${\ord(n-\gamma_j)=\ord(f(n))-\lambda_j}$. \qed

\end{proof}

\bigskip

For all primes~$p$ with finitely many exceptions we have 
\begin{equation}
\label{eallzero}
\lambda_i(p)=\ord_p(\gamma_i)=0 \qquad (i=1, \ldots,d). 
\end{equation}
In particular, ${\lambda(p)=0}$ for all but finitely many~$p$. We denote by~$S_0$ the finite set of primes for which~\eqref{eallzero} does not hold, and we set ${\ell_0=\max_p\lambda(p)}$. We also denote by~$U$, respectively,~$V$, the maximum of absolute values of  the numerators, respectively, denominators, of rational numbers~$\gamma_i$: if ${\gamma_i=u_i/v_i}$ with coprime ${u_i,v_i\in \Z}$ then
$$
U=\max_{1\le i\le d}|u_i|, \qquad V=\max_{1\le i\le d}|v_i|. 
$$

The following is a version of Lemma~2 from~\cite{LS06}.

\begin{lemma}
\label{lshpar}
Let~$S$ be a finite set of primes containing~$S_0$ and let~$\ell$ be an integer satisfying ${\ell\ge2\ell_0}$. Let~$P$ be the smallest prime not belonging to~$S$. Then, given an integer ${N\ge 1}$, there are at most 
\begin{equation}
\label{ezetaand}
d\left(\zeta(\ell+1-\ell_0)+\frac1{P-1}\right)N+d(VN+U)^{1/2}+d|S|
\end{equation}
positive integers ${n\le N}$ with the property 
\begin{equation}
\label{enosq}
\text{$f(n)$ is not $(S,\ell)$-square-free.}
\end{equation}
Here $\zeta(\cdot)$ is the Riemann $\zeta$-function. % and $\alpha_d$, $\alpha_0$ are the leading  coefficient and the free term  of~$f$. %, and ``sufficiently large'' means exceeding a quantity depending on~$f$. 
\end{lemma}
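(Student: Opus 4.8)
The plan is to split a ``bad'' integer $n\le N$—one for which $f(n)$ fails to be $(S,\ell)$-square-free—according to which prime witnesses the failure, and to bound each contribution by a congruence count supplied by Hensel's lemma (Lemma~\ref{lhens}). By definition $f(n)$ is not $(S,\ell)$-square-free precisely when either (i) $\ord_p(f(n))\ge 2$ for some $p\notin S$, or (ii) $\ord_p(f(n))\ge \ell+1$ for some $p\in S$. A union bound reduces the task to estimating, for each relevant prime $p$, the number of $n\le N$ for which $\ord_p(f(n))$ meets the threshold. In both regimes the threshold exceeds $2\lambda(p)$: for $p\notin S\supseteq S_0$ we have $\lambda(p)=0$ by~\eqref{eallzero}, so $2>2\lambda(p)$; for $p\in S$ the hypothesis $\ell\ge 2\ell_0$ gives $\ell+1>2\ell_0\ge 2\lambda(p)$. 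Hence Lemma~\ref{lhens} applies and produces a unique root $\gamma_j$ with $\ord_p(n-\gamma_j)=\ord_p(f(n))-\lambda_j(p)$; since $\ord_p(n-\gamma_j)>0$ and $n\in\Z$, the root $\gamma_j$ is a $p$-adic integer, so this pins $n$ to a single residue class modulo a power of $p$.

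For the $S$-part, $\lambda_j(p)\le\lambda(p)\le \ell_0$ yields $\ord_p(n-\gamma_j)\ge \ell+1-\ell_0$, so $n$ lies in one of at most $d$ residue classes modulo $p^{\ell+1-\ell_0}$, giving at most $d\bigl(N/p^{\ell+1-\ell_0}+1\bigr)$ admissible $n$. Summing over the $|S|$ primes of $S$ and bounding $\sum_{p\in S}p^{-(\ell+1-\ell_0)}<\zeta(\ell+1-\ell_0)$ produces the term $dN\,\zeta(\ell+1-\ell_0)+d|S|$. For the non-$S$ part I would cut at $(VN+U)^{1/2}$. For $p\le (VN+U)^{1/2}$ the congruence is $n\equiv\gamma_j\pmod{p^2}$, contributing at most $d(N/p^2+1)$; here the main terms sum to at most $dN\sum_{p\ge P}p^{-2}\le dN/(P-1)$, while the ``$+1$'' terms sum to at most $d\,\pi\bigl((VN+U)^{1/2}\bigr)$. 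For $p>(VN+U)^{1/2}$, writing $f(n)=\alpha_d\prod_i(v_in-u_i)/\prod_i v_i$ and using $p\notin S_0$ (so $p\nmid\alpha_d\prod_iv_i$ and distinct roots stay distinct modulo $p$), the condition $\ord_p(f(n))\ge 2$ forces $p^2\mid v_in-u_i$ for a single $i$; since $|v_in-u_i|\le VN+U<p^2$ this means $v_in-u_i=0$, i.e. $n$ is an integer root of $f$. There are at most $d$ such $n$, and together with $\pi(x)\le x$ the whole non-$S$ contribution is at most $dN/(P-1)+d(VN+U)^{1/2}$.

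Adding the two parts gives~\eqref{ezetaand}. The only genuine difficulty is controlling the infinitely many primes outside $S$: a naive union bound diverges because of the ``$+1$'' in each congruence count. The cutoff at $(VN+U)^{1/2}$ is exactly what tames this—large primes contribute only the (at most $d$) integer roots of $f$, so the dangerous ``$+1$''s are confined to primes below the cutoff and are absorbed into the $d(VN+U)^{1/2}$ term. The remaining bookkeeping—checking that the Hensel hypothesis $\ord_p(f(n))>2\lambda(p)$ holds in both regimes, which is where $S\supseteq S_0$ and $\ell\ge 2\ell_0$ enter, and that $\gamma_j$ is $p$-integral so the residue class is well defined—is routine.
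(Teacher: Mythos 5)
Your proof is correct and follows essentially the same route as the paper's: the same split into $p\in S$ (threshold $\ell+1$) versus $p\notin S$ (threshold $2$), the same verification that the hypothesis of Lemma~\ref{lhens} holds in both regimes, the same residue-class counts, and the same cutoff at $(VN+U)^{1/2}$. The only cosmetic difference is at the large primes $p>(VN+U)^{1/2}$, where you re-derive by an explicit factorization of $f(n)$ what the paper gets directly from $n\equiv\gamma_i\bmod p^2$ and $1\le n\le N$ (namely that such $n$ must be an integer root of $f$), and then absorb those at most $d$ values into the $d(VN+U)^{1/2}$ term.
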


\begin{proof}
Let ${n\in \{1,\ldots, N\}}$ satisfy~\eqref{enosq}. Then we have one of the following options:
\begin{align}
\label{epins}
\ord_p(f(n))&>\ell \qquad \text{for some ${p\in S}$},\\
\label{epnotins}
\ord_p(f(n))&>1 \qquad \text{for some ${p\notin S}$}. 
\end{align}
In the case~\eqref{epins} we have ${\ord_p(f(n))>2\ell_0\ge 2\lambda(p)}$. Lemma~\ref{lhens} implies that for some root $\gamma_i$ we have 
${n\equiv \gamma_i \bmod p^{\ord_p(f(n))-\lambda_i(p)}}$. Since ${\ord_p(f(n))\ge \ell+1}$ and ${\lambda_i(p)\le \ell_0}$, this implies 
\begin{equation}
\label{emodpel}
n\equiv \gamma_i \bmod p^{\ell+1-\ell_0}. 
\end{equation}
When~$p$ and~$i$ are fixed, the number of ${n\in \{1,\ldots, N\}}$ satisfying~\eqref{emodpel} is bounded by ${N/p^{\ell+1-\ell_0}+1}$. Summing up over all ${p\in S}$ and ${i\in \{1, \ldots, d\}}$, we estimate the total number of~$n$ satisfying~\eqref{epins} as
\begin{equation}
\label{ezeta}
d\sum_{p\in S}\left(\frac N{p^{\ell+1-\ell_0}}+1\right) \le dN\sum_p\frac 1{p^{\ell+1-\ell_0}}+d|S|=d\zeta(\ell+1-\ell_0)N+d|S|.
\end{equation}

In the case~\eqref{epnotins} we have ${\lambda(p)=0}$ and  ${\ord_p(f(n))\ge 2}$. Lemma~\ref{lhens} implies that for some root $\gamma_i$ we have 
\begin{equation}
\label{emodtwo}
n\equiv \gamma_i \bmod p^2. 
\end{equation}
%which implies ${p^2\mid v_in-u_i}$ (recall that we write ${\gamma_i=u_i/v_i}$ with coprime ${u_i,v_i\in \Z}$). 
Since ${1\le n\le N}$, this implies ${n=\gamma_i}$ or ${p\le (VN+U)^{1/2}}$. 

When~$p$ and~$i$ are fixed, the number of ${n\in \{1,\ldots, N\}}$ satisfying~\eqref{emodtwo} is bounded by ${N/p^2+1}$. Summing up over all~$p$ satisfying  ${P\le p\le (VN+U)^{1/2}}$ and all ${i\in \{1, \ldots, d\}}$, we estimate the total number of~$n$ satisfying~\eqref{epnotins} as
\begin{align}
d\sum_{P\le p\le (VN+U)^{1/2}}\left(\frac N{p^2}+1\right) &\le dN\sum_{p\ge P}\frac 1{p^2}+d(VN+U)^{1/2} \nonumber\\
\label{ebigp}
&\le d\frac N{P-1}+d(VN+U)^{1/2}. 
\end{align}
Summing~\eqref{ezeta} and~\eqref{ebigp}, we obtain~\eqref{ezetaand}. \qed
\end{proof}

\bigskip

An immediate consequence is that, with suitably chosen~$S$ and~$\ell$, ``most'' of the values $f(n)$ are $(S,\ell)$-square-free. Here is the precise statement. 

\begin{corollary}
\label{cshpar}
There exist a finite set of primes~$S_1$ and a positive integer~$\ell_1$ (both depending only on~$f$) such that the following holds. For every ${S\supseteq S_1}$ and every ${\ell\ge \ell_1}$ there exists ${N_0=N_0(f,S)}$ such that for ${N\ge N_0}$, at most $N/2$ positive integers ${n\le N}$ satisfy~\eqref{enosq}.  
\end{corollary}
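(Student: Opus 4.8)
The plan is to deduce Corollary~\ref{cshpar} from Lemma~\ref{lshpar} by a limiting argument, forcing the linear term in~\eqref{ezetaand} to dominate with a coefficient strictly below $1/2$. Fix $f$, and recall that $d$, $\ell_0$, $S_0$, $U$, $V$ are determined by $f$. For any admissible pair, i.e. $S\supseteq S_0$ and $\ell\ge 2\ell_0$, Lemma~\ref{lshpar} bounds the number of $n\le N$ satisfying~\eqref{enosq} by the expression~\eqref{ezetaand}. Dividing by $N$ and letting $N\to\infty$, the terms $d(VN+U)^{1/2}$ and $d|S|$ contribute nothing to the limit, so everything hinges on forcing the linear coefficient $d(\zeta(\ell+1-\ell_0)+1/(P-1))$ below $1/2$, where $P$ is the least prime not in $S$.

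To this end I would work from the prime sum actually appearing in~\eqref{ezeta} rather than from its majorant in~\eqref{ezetaand}: the relevant quantity is $\sum_p p^{-(\ell+1-\ell_0)}\le \zeta(\ell+1-\ell_0)-1$, which tends to $0$ as $\ell\to\infty$, while $1/(P-1)\to 0$ as the least prime $P\notin S$ grows. Hence I would first choose $S_1\supseteq S_0$ so that the least prime $P_1\notin S_1$ satisfies $d/(P_1-1)<1/8$, and then choose $\ell_1\ge 2\ell_0$ so that $d\sum_p p^{-(\ell_1+1-\ell_0)}<1/8$. Both $S_1$ and $\ell_1$ depend only on $f$. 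The key monotonicity is that $\sum_p p^{-(\ell+1-\ell_0)}$ decreases in $\ell$ and $1/(P-1)$ decreases as $S$ grows; consequently, for every $S\supseteq S_1$ and every $\ell\ge\ell_1$ the linear coefficient is at most $d\bigl(\sum_p p^{-(\ell_1+1-\ell_0)}+1/(P_1-1)\bigr)<1/4$.

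Finally I would absorb the lower-order terms. With the coefficient bounded by $1/4$, it remains to ensure $d(VN+U)^{1/2}+d|S|\le N/4$; since the left-hand side is $O(N^{1/2})$ plus a constant depending only on $|S|$, this holds for all $N\ge N_0$, where $N_0$ depends only on $f$ (through $U,V,d$) and on $S$ (through $|S|$), as required, and in particular not on $\ell$. Combining the two estimates makes~\eqref{ezetaand} at most $N/2$, which is the assertion of the corollary. The one genuine obstacle is the step forcing the $\zeta$-type term to $0$: one must read it as the prime sum $\sum_p p^{-(\ell+1-\ell_0)}$ (equivalently, use $\zeta(\ell+1-\ell_0)-1$), because the Riemann zeta values themselves tend to $1$, not $0$, so the bound~\eqref{ezetaand} taken at face value would not bring the coefficient below $1/2$. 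Everything else is routine bookkeeping.
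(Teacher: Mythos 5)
Your proposal is correct and follows essentially the same route as the paper: apply Lemma~\ref{lshpar}, force the coefficient of the linear term below a fixed constant by taking $\ell_1$ large and $S_1$ containing all small primes, then absorb the $O(N^{1/2})$ and constant terms for $N\ge N_0(f,S)$. However, the point you flag as "the one genuine obstacle" is in fact a necessary repair of the paper's own argument: the paper's proof chooses $\ell_1$ and $P_1$ with $d\zeta(\ell_1+1-\ell_0)<\frac16$ and $\frac{d}{P_1-1}<\frac16$, and the first condition is unsatisfiable, since $\zeta(s)>1$ for all $s>1$ while $d\ge 1$. The slip originates in the proof of Lemma~\ref{lshpar}, where the sum over primes $\sum_p p^{-(\ell+1-\ell_0)}$ appearing in~\eqref{ezeta} is equated with $\zeta(\ell+1-\ell_0)$; that sum is the prime zeta function, bounded by $\zeta(\ell+1-\ell_0)-1$, which does tend to $0$ as $\ell\to\infty$, whereas the Riemann zeta values tend to $1$. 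Reading the bound through the prime sum, as you do (equivalently, replacing $\zeta(\ell+1-\ell_0)$ by $\zeta(\ell+1-\ell_0)-1$ in~\eqref{ezetaand}), restores the argument. Your supporting observations are also correct: the prime sum is decreasing in $\ell$; enlarging $S$ beyond $S_1$ can only increase the least excluded prime $P$, so the choice of $(S_1,\ell_1)$ propagates to all admissible $(S,\ell)$; and $N_0$ can be taken independent of $\ell$, as the statement requires, because the coefficient bound holds uniformly for $\ell\ge\ell_1$ and the lower-order terms $d(VN+U)^{1/2}+d|S|$ do not involve $\ell$.
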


\begin{proof}
Let~$\ell_1$ be a positive integer and $P_1$ a prime number satisfying 
$$
d\zeta(\ell_1+1-\ell_0)< \frac16, \qquad \frac d{P_1-1}< \frac16.
$$
Setting ${S_1=S_0\cup\{\text{primes $p<P_1$\}}}$, the result follows. \qed. 
\end{proof}

\section{Proof of Theorem~\ref{tmain}}
\label{sproof}

We start with the special case of a superelliptic curve. 

\begin{theorem}
\label{tsuper}
Let ${F(T)\in \Q[T]}$ be a non-constant polynomial whole all roots are rational numbers,~$L$ a number field  and~$e$  a positive integer. Assume that $F(T)$ is not an $e$th power in $\bar\Q[T]$. Then there exists a positive number~$c$ such that, for  large~$N$, among the number fields 
\begin{equation}
\label{esuper}
L(F(1)^{1/e}), \ldots, L(F(N)^{1/e})
\end{equation}
there is at least $cN$ distinct. 
\end{theorem}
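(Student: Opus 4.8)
The goal is to show that among the fields $L(F(n)^{1/e})$ for $n \le N$, at least $cN$ are distinct. The key tool is Lemma~\ref{lssquare}\ref{imanyfields}: if I can exhibit $cN$ values $F(n)$ that are $(S,\ell)$-square-free for a fixed finite set $S$ and fixed $\ell$, and if each such value factors as $F(n) = A_n$ with $a_n \mid A_n \mid a_n^{e-1}$ for an $S$-square-free $a_n$, then the corresponding fields split into at most $2(\ell+1)^{|S|}$ isomorphism classes per $S$-equality class, forcing $\ge N / 2(\ell+1)^{|S|}$ distinct fields. So the problem reduces to producing abundantly many $n$ for which $F(n)$ is almost square-free.

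**Carrying out the main steps.**

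First I would normalize: since $L(F(n)^{1/e})$ depends only on $F(n)$ modulo $e$th powers, I may replace $F(n)$ by its $e$th-power-free part. The hypothesis that $F$ is not an $e$th power in $\bar\Q[T]$ is what prevents all the $F(n)$ from collapsing to the trivial field $L$; I would use it to ensure $\ord_p(F(n)) \not\equiv 0 \pmod e$ for a positive-proportion set of $n$ at suitable ramifying primes. Next, I apply Corollary~\ref{cshpar} to the integer polynomial $f(T)$ obtained by clearing denominators in $F(T)$ (writing $F(T) = \alpha_d \prod (T-\gamma_i)$ with rational $\gamma_i$): this yields a finite set $S_1$ and integer $\ell_1$ so that, choosing any $S \supseteq S_1$ containing all primes ramified in $L$ and all primes dividing $e$, and any $\ell \ge \ell_1$, at most $N/2$ of the integers $n \le N$ give a value $f(n)$ that fails to be $(S,\ell)$-square-free. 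Thus at least $N/2$ values are $(S,\ell)$-square-free.

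**Connecting the square-free values to distinct fields.**

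For each of these $\ge N/2$ good values, I set $a_n$ to be the $S$-square-free integer recording the $\ord_p(f(n))$ at primes $p \notin S$, and I take $A_n$ to be the $e$th-power-free part of $f(n)$ (adjusted by $S$-units and by the fixed scaling constant $\alpha_d$, which only changes $F(n)$ by a bounded factor supported on $S$). The divisibility $a_n \mid A_n \mid a_n^{e-1}$ holds because outside $S$ each prime divides $f(n)$ to exponent exactly $1$, so its contribution to the $e$th-power-free part lies strictly between the first and $(e-1)$st power. Then $L(F(n)^{1/e}) = L(A_n^{1/e})$ up to the $S$-part, and Lemma~\ref{lssquare}\ref{imanyfields} applied with $N' = N/2$ yields at least $N'/2(\ell+1)^{|S|} = N/4(\ell+1)^{|S|}$ distinct fields, giving the theorem with $c = 1/4(\ell+1)^{|S|}$.

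**The main obstacle.**

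The delicate point is the bookkeeping at the primes in $S$ and at the scaling constant $\alpha_d$: clearing denominators turns $F(n)$ into $f(n)$ times a fixed rational, and extracting the $e$th-power-free part can shuffle prime powers at $p \in S$ arbitrarily. I would handle this by absorbing all $S$-supported contributions into $L$ from the outset — enlarging $L$ to contain the $e$th roots of all primes in $S$ (a fixed finite extension) so that the fields $L(F(n)^{1/e})$ depend only on the part of $F(n)$ coprime to $S$. After this reduction the $S$-equal values genuinely give isomorphic fields over the enlarged $L$, and the counting goes through cleanly; the final constant $c$ then depends only on $F$, $L$, and $e$, as required.
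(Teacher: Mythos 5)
Your overall strategy --- produce a positive proportion of $(S,\ell)$-square-free values and feed them into Lemma~\ref{lssquare}:\ref{imanyfields} --- is indeed the paper's, but there is a genuine gap at the step where you apply Corollary~\ref{cshpar} ``to the integer polynomial $f(T)$ obtained by clearing denominators in $F(T)$''. All of Section~\ref{sgaq} (in particular Corollary~\ref{cshpar}, whose proof rests on the Hensel-type Lemma~\ref{lhens}) is stated and proved only for a \emph{separable} polynomial, and the conclusion is genuinely false without separability: take ${F(T)=T^2(T-1)}$ and ${e=3}$. This $F$ is not a cube, has integer coefficients and rational roots, and is already $e$th-power-free as a polynomial, yet ${\ord_p(F(n))\ge 2\,\ord_p(n)}$ for every prime~$p$, so $F(n)$ can be $(S,\ell)$-square-free only when~$n$ is an $S$-unit --- that is, for $O\bigl((\log N)^{|S|}\bigr)$ values ${n\le N}$, not for a proportion $1/2$ of them. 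Your only normalization, replacing $F$ by its $e$th-power-free part, removes root multiplicities divisible by~$e$ but leaves multiplicities $2,\dots,e-1$ intact, so for every ${e\ge 3}$ your argument produces essentially no good values of~$n$ at all. (For ${e=2}$ the $e$th-power-free part \emph{is} separable, and your argument then goes through.)

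The missing idea is precisely the paper's key construction: after reducing multiplicities modulo~$e$ and clearing denominators, one takes the \emph{radical} of~$F$, i.e.\ a separable polynomial ${f(T)\in\Z[T]}$ satisfying ${f(T)\mid F(T)}$ and ${F(T)\mid f(T)^{e-1}}$ in $\Z[T]$. Corollary~\ref{cshpar} is applied to this~$f$, and then the pair ${a_n=f(n)}$, ${A_n=F(n)}$ satisfies verbatim the hypotheses ${a_n\mid A_n}$, ${A_n\mid a_n^{e-1}}$ of Lemma~\ref{lssquare}:\ref{imanyfields}. This also makes your entire ``bookkeeping'' paragraph (enlarging~$L$ by $e$th roots of the primes in~$S$, passing to prime-to-$S$ parts) unnecessary, since the lemma already tolerates arbitrary behaviour at primes inside~$S$. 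A second, minor, inaccuracy: Lemma~\ref{lssquare}:\ref{imanyfields} requires the integers $a_i$ to be \emph{distinct}, and distinct~$n$ can give equal values $f(n)$; the paper accounts for this with the factor $1/d$ (a degree-$d$ polynomial takes each value at most~$d$ times), which your final constant $1/4(\ell+1)^{|S|}$ omits --- harmless for the qualitative statement, but it must be addressed for the counting to be complete.
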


\begin{proof}
We may assume that the roots of~$F$ are all of multiplicity not exceeding ${e-1}$. Furthermore, multiplying~$F$ by ${a^e}$ with a suitable non-zero integer~$a$, we may assume that  ${F(T)\in\Z[T]}$. Then there exists a separable polynomial ${f(T)\in \Z[T]}$ such that ${f(T)\mid F(T)}$ and ${F(T)\mid f(T)^{e-1}}$ in the ring $\Z[T]$.

Corollary~\ref{cshpar} implies that, with suitably chosen~$S$ and~$\ell$ the following holds: for large~$N$,  at least half of the numbers 
\begin{equation}
\label{enumbers}
f(1), \ldots, f(N)
\end{equation}
are $(S,\ell)$-square-free. The polynomial~$f$ takes every value at most~$d$ times, where ${d=\deg f}$. Hence among~\eqref{enumbers} there are at least $N/2d$ distinct  $(S,\ell)$-square-free numbers. We complete the proof applying Lemma~\ref{lssquare}:\ref{imanyfields}. \qed
\end{proof}

\bigskip

Now we can prove Theorem~\ref{tmain} in full generality. Note first of all that, if ${P,Q\in X(\bar\Q)}$ and~$L$ is a number field, then ${L(P)\ne L(Q)}$ implies ${\Q(P)\ne \Q(Q)}$. Hence, it suffices to show that, for some number field~$L$, among the fields 
\begin{equation}
\label{elfi}
L(P_1), \ldots, L(P_N)
\end{equation}
there are at least $cN$ distinct. 

Now we use Kummer's theory. Since $\bar\Q(X)/\bar\Q(t)$ is a abelian extension, for some number field~$L$ we have ${L(X)=L(t,F_1(t)^{1/e_1}, \ldots, F_s(t)^{1/e_s})}$, where  ${F_i(t)\in L[t]}$, ${e_i\ge 2}$ and $F_i(t)$ is not a $e_i$th power in $\bar\Q[t]$. 

Moreover, the roots of every $F_i$ are finite critical values of~$t$, which, by the hypothesis, belong to~$\Q$. In particular, we may assume that ${F_i(t)\in \Q[t]}$. 

Pick some~$F_i$ and~$e_i$ and call them~$F$,~$e$ in the sequel. Theorem~\ref{tsuper} implies that, for large~$N$, among the fields~\eqref{esuper} there are at least $c'N$ distinct. But $L(F(n)^{1/e})$ is a subfield of $L(P_n)$ (provided~$L$ contains the $e$th roots of unity, which can be always achieved by extending~$L$). It remains to note that the fields $L(P_n)$ are of degree over~$\Q$ bounded independently of~$n$:
$$
[L(P_n):\Q]\le [L(X):\Q(t)]. 
$$
A field of degree~$r$ over~$\Q$ may have at most $c(r)$ distinct subfields. Hence, producing $c'N$ distinct subfields of the fields~\eqref{elfi} implies that among the fields~\eqref{elfi} there are at least $cN$ distinct. \qed

%%%%%%%%%%%%%%%%%%%%%%%%%%%%%%%%%%%%%%%%%%%%%

%%%%%%%%%%%%%%%%%%%%%%%%%%%%%%%%%%%%%%%%%%%%%

{\footnotesize

}


\begin{thebibliography}{99}






\bibitem{BL16}
\textsc{Yu. Bilu, F. Luca}, Diversity in Parametric Families of Number Fields, submitted; \url{arXiv:1607.00904 [math.NT]} 

\bibitem{DZ94}
\textsc{R.~Dvornicich, U.~Zannier,}  Fields containing values of algebraic functions, \textit{Ann. Scuola Norm. Sup. Pisa Cl. Sci. (4)} \textbf{21} (1994),  421--443.

\bibitem{DZ95}
\textsc{R.~Dvornicich, U.~Zannier,}  Fields containing values of algebraic functions II
(On a conjecture of Schinzel), \textit{Acta Arith.} \textbf{72} (1995),  201--210.



\bibitem{LS06}
\textsc{F. Luca, I.E. Shparlinski}, Approximating positive reals by ratios of radicals of
consecutive integers,\textit{ Sem. Math. Sci., Keio Univ.} \textbf{35} (2006), 141--149.







%\bibitem{JJ89}\textsc{J.~K.~Jerome},  \textit{Three Men in a Boat (To Say Nothing of the Dog)}, Bristol, Arrowsmith, 1889.







\end{thebibliography}
\end{document}